\documentclass[12pt]{amsproc}
\newtheorem{theorem}{\bf Theorem}[section]
\newtheorem{lemma}[theorem]{\bf Lemma}

\newtheorem{corollary}[theorem]{\bf Corollary}

\author{Pavel Shumyatsky}
\address{Department of Mathematics, University of Brasilia, 70910 Bras\'ilia DF, Brazil}
\email{pavel@unb.br}

\author{Danilo  Silveira}
\address{Department of Mathematics, Federal University of Goi\'as, 75704-020 Catal\~ao GO, Brazil}
\email{sancaodanilo@gmail.com}

\keywords{Engel elements, multilinear commutator words}
\subjclass[2010]{ 20F10, 20F45, 20F40}

\thanks{This work was supported by the Conselho Nacional de Desenvolvimento Cient\'{\i}fico e Tecnol\'ogico (CNPq), Brazil. }

\title[On finite groups in which commutators are Engel]{On finite groups in which commutators are covered by Engel subgroups}

\begin{document}

\begin{abstract}  
Let $m,n$ be positive integers and $w$ a multilinear commutator word. Assume that $G$ is a finite group having subgroups $G_1,\ldots,G_m$ whose union contains all $w$-values in $G$. Assume further that all elements of the subgroups $G_1,\ldots,G_m$ are $n$-Engel in $G$. It is shown that the verbal subgroup $w(G)$ is $s$-Engel for some $\{m,n,w\}$-bounded number $s$.
\end{abstract}

\maketitle

\section{Introduction}

Given a group-word $w=w(x_1,\dots,x_k)$, we think of it primarily as a function of $k$ variables defined on any group $G$. We denote by $w(G)$ the verbal subgroup of $G$  corresponding to the word $w$, that is, the subgroup generated by the $w$-values in $G$. When the set of all $w$-values in $G$ is contained in a union of subgroups we wish to know whether the properties of the covering subgroups have impact on the structure of the verbal subgroup $w(G)$. The reader can consult the articles \cite{as,surveyrendiconti, DMS1,DMS-revised,PDM,Snilp} for results on countable coverings of $w$-values in profinite groups. 

The purpose of this paper is to prove the following result.

\begin{theorem}\label{main} Let $m,n$ be positive integers and $w$ a multilinear commutator word. Assume that $G$ is a finite group having subgroups $G_1,\ldots,G_m$ whose union contains all $w$-values in $G$. Assume further that all elements of the subgroups $G_1,\ldots,G_m$ are $n$-Engel in $G$. Then $w(G)$ is $s$-Engel for some $\{m,n,w\}$-bounded number $s$.
\end{theorem}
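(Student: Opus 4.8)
The plan is to reduce the statement to a question about an associated Lie ring and to resolve the latter with Zelmanov's theorem on PI Lie algebras generated by ad-nilpotent elements. First I would record the basic reductions. Since every $w$-value lies in one of the $G_i$ and every element of each $G_i$ is $n$-Engel in $G$, each individual $w$-value is a left $n$-Engel element of $G$ (if the hypothesis is read in the right-Engel sense, pass to the left version in the usual way). By Baer's theorem the left Engel elements of a finite group lie in the Fitting subgroup, so all $w$-values lie in $F(G)$ and hence $R:=w(G)$ is nilpotent. Put $H_i:=G_i\cap R$. Each $H_i$ is a subgroup all of whose elements are $n$-Engel, so it is a finite $n$-Engel group and therefore nilpotent of $\{n\}$-bounded class. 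Moreover $R=\langle H_1,\dots,H_m\rangle$, because $R$ is generated by the $w$-values and each $w$-value lies in some $H_i$. Since $R$ is nilpotent it is the direct product of its Sylow subgroups, and each Sylow subgroup is generated by the corresponding Sylow components of the $H_i$, which are again nilpotent of bounded class with $n$-Engel elements. Hence it suffices to bound, independently of $p$, the nilpotency class of a finite $p$-group $P=R$ generated by $m$ such subgroups.

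The case $m=1$ is immediate: there $R\le G_1$, and $G_1$ is nilpotent of $\{n\}$-bounded class, so $R$ is too. The genuine difficulty is $m\ge 2$, since a nilpotent group generated by boundedly many subgroups of bounded class can have arbitrarily large class (witness free nilpotent groups); the Engel hypothesis must therefore be used in an essential way. To exploit it I would pass to the Lie ring $L=L(P)$ associated with the Jennings--Zassenhaus filtration of $P$, whose class controls that of $P$ up to a $p$-independent function. Writing $V_i$ for the image of $H_i$ in the first homogeneous component, one has $L_1=V_1+\cdots+V_m$, each $V_i$ is closed under addition because $H_i$ is a subgroup, and the $n$-Engel condition translates into ad-nilpotency: for every $a\in H_i$ the homogeneous element $\bar a$ satisfies $(\mathrm{ad}\,\bar a)^{n}=0$. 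Thus $L$ is generated by $m$ additive subgroups each consisting of ad-nilpotent elements of bounded index.

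The engine is then Zelmanov's theorem: a Lie algebra that satisfies a polynomial identity and is generated by finitely many ad-nilpotent elements (with all commutators in the generators ad-nilpotent) is nilpotent, with class bounded only in terms of the number of generators, the ad-nilpotency index, and the identity. To bound the class of $L$ it is enough, by the standard argument, to bound the class of every subalgebra generated by a bounded number of homogeneous elements drawn from the $V_i$: a nonzero left-normed commutator of length $c+1$ lives in such a subalgebra, so if all of these are nilpotent of class $\le c$ then so is $L$. Each such subalgebra is generated by boundedly many ad-nilpotent elements of bounded index, the bounded class of the $H_i$ being used to keep the indices of the relevant commutators under control.

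The main obstacle is the supply of a polynomial identity for $L$ with $p$-independent control, so that Zelmanov's theorem applies with bounds depending only on $m$, $n$, and $w$. This is precisely where the multilinear commutator word $w$ must enter: the verbal structure of $R=w(G)$, together with $L$ being generated by $m$ nilpotent subalgebras of bounded class, should force the required identity on $L$ (a multilinear Lie identity derived from $w$, or solubility/PI extracted from the bounded number of nilpotent generating subalgebras). Granting this, Zelmanov's theorem yields a $\{m,n,w\}$-bounded class for $L$, hence for $P$, hence for every Sylow subgroup of $R$, and therefore a $\{m,n,w\}$-bounded class for $w(G)$ itself; since a nilpotent group of class $c$ is $c$-Engel, this gives the desired $s$-Engel conclusion. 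I expect the verification that the ad-nilpotency indices, the number of generators, and the identity all remain bounded simultaneously to be the most delicate point of the argument.
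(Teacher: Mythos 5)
Your outline targets a conclusion that is strictly stronger than the theorem and is in fact false: you set out to bound the nilpotency class of $w(G)$ itself, whereas the theorem only asserts that $w(G)$ is $s$-Engel, and the gap between these two statements is exactly where the argument breaks. The first symptom is your claim that each $H_i=G_i\cap w(G)$, being a finite $n$-Engel group, is ``nilpotent of $\{n\}$-bounded class.'' Finite $n$-Engel groups are nilpotent (Zorn/Baer), but their class is \emph{not} bounded in terms of $n$ alone: by Razmyslov's construction of insoluble locally finite $(p-2)$-Engel groups of exponent $p$ for $p\geq 5$, there exist finite $3$-Engel groups of exponent $5$ of arbitrarily large derived length. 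Taking $m=1$, $w=x$ and $G_1=G$ equal to such a group shows both that your ``immediate'' case $m=1$ fails and that the global goal --- an $\{m,n,w\}$-bounded class for $w(G)$ --- is unattainable; this is precisely why the Burns--Medvedev result quoted in the paper (Theorem \ref{BM}) concludes only that $\gamma_c(G)$ has bounded \emph{exponent}, not that it vanishes. The correct target, and the one the paper pursues in both Lemma \ref{delta} and the final proof, is to bound the class of every \emph{two-generated} subgroup $\langle a,b\rangle$ with $a,b\in w(G)$ (which does yield the $s$-Engel conclusion, since $[b,{}_s\,a]\in\gamma_{s+1}(\langle a,b\rangle)$); to make that problem finitely generated in a controlled way one must first write $a$ and $b$ as products of boundedly many well-behaved elements, which is what Lemma \ref{lemmaSA} and Corollary \ref{cccc} are for.

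Even at the Lie-ring level your sketch leaves open two gaps that the paper's proof is specifically engineered to close. First, Zelmanov's theorem requires \emph{every commutator in the generators} to be ad-nilpotent, and a commutator $[a,b]$ with $a\in H_1$, $b\in H_2$ need not lie in any $G_i$, so nothing in the hypotheses makes it Engel, let alone ad-nilpotent of bounded index; the paper sidesteps this by generating with $\delta_k$-values, a commutator-closed set, so that every commutator in the generators is again a $\delta_k$-value, hence covered by some $G_i$, hence ad-nilpotent of index at most $n$. Second, you acknowledge but do not produce the polynomial identity, and you need it with $p$-independent bounds; the paper obtains it from the Wilson--Zelmanov theorem applied to the group identity $[y,{}_n\,\delta_k(x_1,\ldots,x_{2^k})]\equiv 1$, and it handles the dependence on $p$ by first applying the Lie \emph{ring} result (Theorem \ref{BazaRing}) to get $e\gamma_c(L)=0$ with $e,c$ bounded --- which settles all primes $p\nmid e$ and confines the remaining analysis to boundedly many primes --- and only then invoking $L_p$, Theorem \ref{finiteLazard}, powerful $p$-group arguments, Burns--Medvedev, and the restricted Burnside problem. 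These steps are not optional refinements: they are what separates the true statement (bounded Engel length) from the false one (bounded class) that your proposal aims at.
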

 Here and throughout the article we use the expression ``$\{a,b,\dots\}$-bounded'' to abbreviate ``bounded from above in terms of  $a,b,\dots$ only''.

Recall that multilinear commutators are words which are obtained by nesting commutators, but using always different variables. More formally, the word $w(x) = x$ in one variable is a multilinear commutator; if $u$ and $v$ are  multilinear commutators involving different variables then the word $w=[u,v]$ is a multilinear commutator, and all multilinear commutators are obtained in this way. The number of variables involved in a multilinear commutator $w$ is called the weight of $w$.

Also, recall that a group $G$ is called an Engel group if for every $x,y\in G$ the equation $[y,x,x,\dots,x]=1$ holds, where $x$ is repeated in the commutator sufficiently many times depending on $x$ and $y$. The long commutators $[y,x,\dots,x]$, where $x$ occurs $i$ times, are denoted by $[y,{}_i\,x]$. An element $x\in G$ is (left) $n$-Engel if $[y,{}_n\,x]=1$ for all $y\in G$. A group $G$ is $n$-Engel if $[y,{}_n\,x]=1$ for all $x,y\in G$. Currently, finite $n$-Engel groups are understood fairly well. A theorem of Baer says that finite Engel groups are nilpotent (see \cite[Theorem 12.3.7]{Rob}). More specific properties of finite $n$-Engel groups can be found for example in a theorem of Burns and Medvedev quoted as Theorem \ref{BM} in Section 3 of this paper. The interested reader is refered to the survey \cite{trau} and references therein for further results on finite and residually finite 
Engel groups.

In the next section we describe the Lie-theoretic machinery that will be used in the proof of Theorem \ref{main}. The proof of the theorem is given in Section 3.

\section{Associating a Lie ring to a group}

There are several well-known ways to associate a Lie ring to a group $G$ (see \cite{Huppert2,Kh,S2000}). For the reader's convenience we will briefly describe the construction that we are using in the present paper.

A series of subgroups $$G=G_1\geq G_2\geq\cdots\eqno{(*)}$$ is called an $N$-series if it satisfies $[G_i,G_j]\leq G_{i+j}$ for all $i,j$. Obviously any $N$-series is central, i.e. $G_i/G_{i+1}\leq Z(G/G_{i+1})$ for any $i$. Given an $N$-series $(*)$, let $L^*(G)$ be the direct sum of the abelian groups $L_i^*=G_i/G_{i+1}$, written additively. Commutation in $G$ induces a binary operation $[,]$ in $L^*(G)$. For homogeneous elements $xG_{i+1}\in L_i^*,yG_{j+1}\in L_j^*$ the operation is defined by $$[xG_{i+1},yG_{j+1}]=[x,y]G_{i+j+1}\in L_{i+j}^*$$ and extended to arbitrary elements of $L^*(G)$ by linearity. It is easy to check that the operation is well-defined and that $L^*(G)$ with the operations $+$ and $[,]$ is a Lie ring. 

In this paper we use the above construction in the cases where $(*)$ is either the lower central series of $G$ or the $p$-dimension central series, also known under the name of Zassenhaus-Jennings-Lazard series (see \cite[p.\ 250]{Huppert2} for details). In the former case we denote the associated Lie ring by $L(G)$. In the latter case $L^*(G)$ can be viewed as a Lie algebra over the field with $p$ elements. We write $L_p(G)$ for the subalgebra generated by the first homogeneous component $G_1/G_2$. Usually nilpotency of $L^*(G)$ has strong effect on the structure of $G$. In particular, $L(G)$ is nilpotent of class $c$ if and only if the group $G$ is nilpotent of class $c$. Nilpotency of $L_p(G)$ also leads to strong conclusions about $G$. The proof of the following theorem can be found in \cite{KS}.

\begin{theorem}\label{finiteLazard}
Let $P$ be a $d$-generated finite $p$-group and suppose that $L_p(G)$ is nilpotent of class $c$. Then $P$  has a powerful characteristic subgroup of  $\{p,c,d\}$-bounded index.
\end{theorem}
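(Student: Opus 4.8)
The plan is to reduce the statement to a bound on the rank of $P$ and then to invoke the structure theory of powerful $p$-groups. Recall that, by the theorem of Lubotzky and Mann, a finite $p$-group of rank $r$ possesses a powerful characteristic subgroup whose index is $\{p,r\}$-bounded. Hence it will be enough to show that the rank of $P$ is $\{p,c,d\}$-bounded, and the whole difficulty is concentrated in extracting such a bound from the hypothesis on $L_p(P)$.

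First I would make the hypothesis quantitative at the level of the graded Lie algebra. Since $P$ is generated by $d$ elements, the first homogeneous component $D_1/D_2=P/\Phi(P)$ has dimension at most $d$ over the field of $p$ elements, so $L_p(P)$ is generated by at most $d$ elements and is nilpotent of class $c$. Consequently $\dim L_p(P)\le N$, where $N=N(c,d)$ is the dimension of the free nilpotent Lie algebra of class $c$ on $d$ generators, a $\{c,d\}$-bounded number given by the Witt formula. The homogeneous component of $L_p(P)$ of degree $k$ is the image of $\gamma_k(P)$ in $D_k/D_{k+1}$, so nilpotency of class $c$ is equivalent to the inclusions $\gamma_k(P)\le D_{k+1}$ for all $k>c$.

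Next I would analyse the full restricted Lie algebra $L^*(P)=\bigoplus_i D_i/D_{i+1}$, which by the Jennings--Lazard theory is generated by its first component as a restricted algebra. Writing $M=L_p(P)$ and using $\mathrm{ad}(x^{[p]})=(\mathrm{ad}\,x)^p$ together with Jacobson's formula for the $p$-operation, one checks that every bracket involving a $p$-th power falls back into $M$; hence the subspace $\sum_{k\ge 0} M^{[p^k]}$ is already a restricted subalgebra containing $D_1/D_2$, and therefore equals $L^*(P)$. Comparing homogeneous components degree by degree, and noting that the $p$-operation is $p$-semilinear and so does not increase dimension, one obtains $\dim(D_i/D_{i+1})\le\dim M\le N$ for every $i$; that is, $P$ has $\{c,d\}$-bounded width.

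The hard part will be to pass from this Lie-theoretic information to a bound on the rank of the group itself, since bounded width alone does not bound the rank: a single large Jordan block produces width $2$ but arbitrarily large rank, although there the class of $L_p(P)$ is unbounded. The key is to combine the bounded width with the inclusions $\gamma_k(P)\le D_{k+1}$ for $k>c$. Given an elementary abelian section $A/B$ of $P$, I would filter it by the dimension subgroups $A\cap D_i$ and map the successive layers into the components $D_i/D_{i+1}$; the $p$-operation links weight $i$ with weight $pi$, and since $A/B$ has exponent $p$ the $p$-th powers of its elements are trapped in $B$, which prevents the section from spreading over unboundedly many weights. Together with $\dim(D_i/D_{i+1})\le N$ and $\gamma_k(P)\le D_{k+1}$ for $k>c$, this should yield a $\{p,c,d\}$-bound on $\dim(A/B)$, and hence on the rank of $P$. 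With the rank bounded, the Lubotzky--Mann theorem produces the required powerful characteristic subgroup of $\{p,c,d\}$-bounded index. I expect the rank estimate of this last paragraph to be the main obstacle.
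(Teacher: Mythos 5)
Your reduction to a rank bound plus the Lubotzky--Mann theorem is sound and is, in essence, the standard route; note that the paper itself contains no proof of this statement (it is quoted from \cite{KS}). Your first three paragraphs are essentially correct: $\dim L_p(P)\le N(c,d)$; by the Jennings--Lazard--Quillen theory the restricted algebra $L^*(P)=\bigoplus_i D_i/D_{i+1}$ is generated by $D_1/D_2$; and since $\mathrm{ad}(x^{[p]})=(\mathrm{ad}\,x)^p$, every Lie bracket involving iterated $[p]$-powers of homogeneous elements of $M=L_p(P)$ collapses back into $M$, whence $L^*(P)$ is spanned by $M$ together with such $[p]$-powers and the width of $P$ is $\{c,d\}$-bounded. (One caveat: the $p$-map is not additive but only additive modulo brackets, by Jacobson's formula; since the correction terms lie in $M$, your dimension count survives, but this should be said.)

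The genuine gap is your last paragraph, exactly where you place it, and the mechanism you propose there is not the right one. Nothing prevents an elementary abelian section from spreading over unboundedly many weights: in the direct product of $r$ cyclic groups $\langle x_1\rangle\times\cdots\times\langle x_r\rangle$, each of order $p^n$, the section $\langle x_1^{p^{a_1}},\dots,x_r^{p^{a_r}}\rangle$ modulo $\langle x_1^{p^{a_1+1}},\dots,x_r^{p^{a_r+1}}\rangle$ is elementary abelian and meets $r$ distinct weights $p^{a_1},\dots,p^{a_r}$, yet the rank there is bounded; dispersion over weights is simply not the quantity to control. What nilpotency of $M$ actually yields --- and what your sketch never extracts --- is the following pair of facts. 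First, since every bracket in $L^*(P)$ lies in $M$ and $M$ has no components in degrees $>c$, all brackets between homogeneous components of total degree $>c$ vanish. Second, for $m>c$ the component of degree $pm$ is spanned by $[p]$-images of the component of degree $m$, and by the first fact the $p$-map is additive there; so it induces a surjective linear map $F\colon D_m/D_{m+1}\to D_{pm}/D_{pm+1}$ for every $m>c$. Hence the part of $L^*(P)$ in degrees $>c$ is a direct sum of chains of surjections beginning at degrees in the interval $(c,pc]$, and the total dimension of these initial components is at most $N$. Now for any subgroup $S\le D_{c+1}$, the graded object $\bigoplus_i (S\cap D_i)/(S\cap D_{i+1})$ is a graded $F$-closed subspace of this structure, and $d(S)$ is at most its number of generators under $F$ (brackets vanish in these degrees, so restricted generation reduces to $F$-generation); along each chain that number equals the sum of the dimensions of the kernels of $F$ restricted to the subspace, and these kernels embed in the kernels of the ambient surjections, whose dimensions telescope to the dimension of the chain's initial component. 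This gives $d(S)\le N$, so $D_{c+1}$ has rank at most $N$; since $[P:D_{c+1}]\le p^{cN}$, the rank of $P$ is at most $(c+1)N$, and only now does Lubotzky--Mann produce the powerful characteristic subgroup. Without this surjectivity-plus-telescoping argument, or some equivalent use of the nilpotency of $L_p(P)$ beyond bounded width, your proposal does not reach the conclusion.
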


Recall that powerful $p$-groups were introduced by Lubotzky and Mann in \cite{luma}. They have many nice properties, some of which are listed in the next section.

Thus, criteria of nilpotency of Lie algebras provide effective tools for applications in group theory.

Let $X$ be a subset of a Lie algebra $L$. By a commutator in elements of $X$ we mean any element of $L$ that can be obtained as a Lie product of elements of $X$ with some system of brackets. If $x,y$ are elements of $L$, we define inductively 
$$[x,_0y]=x \text{ and } [x,_iy]=[[x,_{i-1}y],y]\text{ for all positive integers } i.$$  
As usual, we say that an element $a\in L$ is ad-nilpotent if there exists a positive integer $n$ such that $[x,_na]=0$ for all $x\in L$. If $n$ is the least integer with the above property, then we say that $a$ is ad-nilpotent of index $n$. 

The next theorem is a deep result of Zelmanov with many applications to group theory. It was announced by Zelmanov in \cite{Z1,Z0}. A detailed proof was  published in \cite{zenew}.

\begin{theorem}\label{Z1992}	Let $L$ be a Lie algebra over a field and suppose that $L$ satisfies a polynomial identity. If $L$ can be generated by a finite set $X$ such that every commutator in elements of $X$ is ad-nilpotent, then $L$ is nilpotent.
\end{theorem}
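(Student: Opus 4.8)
The plan is to derive nilpotency from two deep structural pillars and to bridge them using the polynomial identity. Recall that an element $c\in L$ is called a \emph{sandwich} if $[x,c,c]=0$ and $[x,c,y,c]=0$ for all $x,y\in L$, i.e. the adjoint operator satisfies $(\mathrm{ad}\,c)^2=0$ and $\mathrm{ad}\,c\,\cdot\,\mathrm{ad}\,y\,\cdot\,\mathrm{ad}\,c=0$ for every $y$. The first pillar is the theorem of Kostrikin and Zelmanov that a Lie algebra over a field generated by finitely many sandwiches is nilpotent (this needs no polynomial identity). The second, and more delicate, pillar is a ``sandwich production'' mechanism: the polynomial identity together with the ad-nilpotency of the generators forces any non-nilpotent quotient to contain nonzero sandwiches. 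Granting these, $L$ is handled by an induction that repeatedly factors out the (nilpotent) ideal generated by all sandwiches.

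First I would record the easy reductions. Since $X$ is finite, $L$ is finitely generated, so nilpotency is equivalent to the vanishing of a fixed term of the lower central series and there is no gap between local and global nilpotency. Arguing by contradiction, suppose $L$ is not nilpotent. The hypothesis that \emph{every} commutator in the elements of $X$ is ad-nilpotent (not merely the generators) is exactly what is needed to keep a large supply of ad-nilpotent elements available after passing to subquotients. Using the structure theory of PI-algebras --- passing to the associative algebra generated by the operators $\mathrm{ad}\,x$ and invoking the theorems of Kaplansky and Posner on primitive and prime PI rings --- I would reduce to a prime (hence, in a suitable sense, nondegenerate) PI Lie algebra that is still generated by ad-nilpotent commutators and is still not nilpotent.

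The crux, and the step I expect to be by far the hardest, is producing a nonzero sandwich in this nondegenerate prime algebra. Here I would linearize the polynomial identity and feed in the relations $[\,\cdot\,,_{n}a]=0$ coming from ad-nilpotency of a generating commutator $a$. Following Zelmanov, the efficient way to organize the combinatorics is to pass to the Grassmann envelope and work with Lie \emph{superalgebras}: an ad-nilpotent element placed in odd degree behaves like a ``super-sandwich'', the polynomial identity propagates to the superalgebra, and after specializing the Grassmann variables one extracts a genuine nonzero sandwich of $L$ --- unless $L$ was nilpotent to begin with. Making this work in all characteristics, and above all guaranteeing that the extracted element is actually nonzero, is where the real depth lies: it requires tight control of the iterated linearizations of the identity and of the many identities relating the operators $\mathrm{ad}\,x$ that are supplied by the structure theory of prime PI algebras.

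Finally I would assemble the pieces. The sandwich produced above generates, together with finitely many conjugates, a nonzero sandwich ideal of the prime quotient, contradicting its nondegeneracy; equivalently, in the positive formulation, the ideal of $L$ generated by all sandwiches is nilpotent by the first pillar, and factoring it out strictly lowers a complexity invariant of the ad-nilpotent generating set, so the induction terminates and forces $L$ itself to be nilpotent. As a by-product the argument yields a bound on the nilpotency class depending only on $|X|$, the ad-nilpotency indices, and the degree of the identity, although the statement as phrased asserts only qualitative nilpotency.
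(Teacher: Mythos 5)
You should first be aware that the paper itself contains no proof of this statement: Theorem \ref{Z1992} is quoted as a deep external result, announced by Zelmanov in \cite{Z1,Z0} with the detailed proof published in \cite{zenew}, so the only fair comparison is with Zelmanov's published argument. Against that benchmark, your proposal is a faithful roadmap but not a proof. The two pillars you identify --- the Kostrikin--Zelmanov theorem that a Lie algebra generated by finitely many sandwiches is nilpotent (which indeed needs no PI hypothesis), and the production of a nonzero sandwich in a non-nilpotent PI Lie algebra with ad-nilpotent generating commutators, organized via linearization, Grassmann envelopes and Lie superalgebras --- are genuinely the architecture of \cite{zenew}, and your remark that the hypothesis must cover \emph{all} commutators in $X$ (not just the generators) so that ad-nilpotency survives passage to subquotients is exactly right. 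But the second pillar, which you yourself flag as ``the crux'' and ``by far the hardest'' step, \emph{is} the theorem: the sandwich-production mechanism, including the control of iterated linearizations and the verification that the extracted element is nonzero in all characteristics, occupies essentially the whole of Zelmanov's paper. Naming the technique does not discharge it, so as a proof your text has a genuine gap located precisely where all the difficulty lives; nothing you wrote is wrong in outline, but nothing you wrote closes that step either.

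Two smaller cautions. First, your claimed by-product --- a nilpotency class bounded in terms of $|X|$, the ad-nilpotency indices and the degree of the identity alone --- overstates what the quantitative version delivers: the paper's Theorem \ref{Z1} bounds the class in terms of $\{f,K,m,n\}$, i.e.\ the bound is allowed to depend on the ground field $K$ (in particular its characteristic) and on the identity $f$ itself, not merely on its degree. Second, your closing induction (``factoring out the sandwich ideal strictly lowers a complexity invariant of the generating set'') is stated too loosely to be checkable: you would need to exhibit the invariant and prove it drops, and in Zelmanov's actual argument the descent is organized differently (around the ad-nilpotency indices and the superalgebra apparatus), so this assembly step also needs real work rather than a gesture.
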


Theorem \ref{Z1992} admits  the following  quantitative version (see for instance \cite{KS}).

\begin{theorem}\label{Z1} Let $L$ be a Lie algebra over a field $K$. Assume that $L$ is generated by $m$ elements such that each commutator in the generators is ad-nilpotent of index at most $n$. Suppose that $L$ satisfies a polynomial identity $f\equiv 0$. Then $L$ is nilpotent of $\{f,K,m,n\}$-bounded class.
\end{theorem}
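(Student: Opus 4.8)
The plan is to derive this quantitative statement directly from the qualitative Theorem \ref{Z1992} by passing to a suitable relatively free Lie algebra, so that essentially no further structural analysis is needed: the uniform bound comes for free once one has a single ``universal'' algebra governing all instances of the hypothesis. Fix $K$, $m$, $n$ and the identity $f$, and let $F$ be the free Lie $K$-algebra on free generators $x_1,\dots,x_m$.

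I would then construct the universal object as follows. Let $I$ be the ideal of $F$ generated by two explicit families of elements: first, all values $f(g_1,\dots,g_r)$ with $g_i\in F$, so that $f\equiv0$ becomes an identity in the quotient (this is the verbal, or $T$-, ideal of $f$); second, all elements $[y,_n u]$, where $u$ ranges over the explicit countable set of all commutators in $x_1,\dots,x_m$ and $y$ ranges over a spanning set of $F$. Set $R=F/I$. By construction $R$ depends only on the data $\{f,K,m,n\}$. It is generated by the $m$ images $\bar x_1,\dots,\bar x_m$ of the free generators; every commutator $u$ in these generators satisfies $(\mathrm{ad}\,u)^n=0$ on $R$ and is therefore ad-nilpotent; and $R$ satisfies $f\equiv0$. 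Thus $R$ meets the hypotheses of Theorem \ref{Z1992}, whence $R$ is nilpotent. Let $c=c(f,K,m,n)$ be its nilpotency class; this is a well-defined function of the four parameters precisely because $R$ itself is.

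It then remains to observe that every $L$ as in the statement is a homomorphic image of $R$. Let $a_1,\dots,a_m$ generate $L$ in such a way that each commutator in the $a_i$ is ad-nilpotent of index at most $n$, and consider the homomorphism $F\to L$ sending $x_i\mapsto a_i$. Both defining families of relators of $R$ map to zero: the first because $L$ satisfies $f\equiv0$, and the second because ad-nilpotence of index at most $n$ of a commutator $u$ in the $a_i$ is exactly the assertion that $[y,_n u]=0$ for all $y\in L$. Hence the map factors through $R$, giving a surjective homomorphism $R\to L$. Since nilpotency class does not increase under homomorphic images, $L$ is nilpotent of class at most $c$, the desired $\{f,K,m,n\}$-bounded value.

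The only genuine subtlety — and the step I would treat most carefully — is the verification that the generator assignment $x_i\mapsto a_i$ really extends to a homomorphism $R\to L$, that is, that every defining relator of $R$ vanishes in $L$. This hinges on reading ``each commutator in the generators is ad-nilpotent of index at most $n$'' as the family of identities $[y,_n u]=0$ (with $u$ a commutator in the generators and $y$ arbitrary), and on the point that imposing exactly these relations together with the verbal ideal of $f$ produces a \emph{single} algebra $R$ rather than a family indexed by the various $L$. Here the hypothesis that the ad-nilpotence index is bounded by the fixed $n$ is essential: it is what makes the second family of relators a fixed set and hence $R$ one fixed finitely generated algebra. Once this bookkeeping is in place, the passage from the merely qualitative nilpotency furnished by Zelmanov's theorem to a uniform bound is automatic, as all relevant instances are quotients of the one algebra $R$.
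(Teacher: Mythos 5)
Your argument is correct and is essentially the same as the intended one: the paper does not prove Theorem \ref{Z1} but refers to \cite{KS}, where the quantitative bound is obtained by exactly this standard reduction --- form the relatively free algebra $R=F/I$, with $I$ generated by the verbal ideal of $f$ together with the relators $[y,{}_n\,u]$ for $u$ a commutator in the free generators, apply Theorem \ref{Z1992} to conclude $R$ is nilpotent of some class $c=c(f,K,m,n)$, and note that every $L$ satisfying the hypotheses is a homomorphic image of $R$. Your bookkeeping is also right where it matters: the uniform bound $n$ on the ad-nilpotence index is precisely what makes the second family of relators a fixed set, so that $R$ is a single algebra and the class $c$ is a genuine function of $\{f,K,m,n\}$.
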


As usual, $\gamma_i(L)$ denotes the $i$th term of the lower central series of $L$. The following Lie-ring variation on the theme of Theorem \ref{Z1992} is a particular case of \cite[Proposition 2.6]{shusa}.
 
\begin{theorem}\label{BazaRing} Let $L$ be a Lie ring satisfying a polynomial identity $f\equiv 0$. Assume that $L$ is generated by $m$ elements such that every commutator in the 	generators is ad-nilpotent of index at most $n$. Then there exist positive integers $e$ and $c$ depending only on $f,m$ and $n$ such that  $e\gamma_c(L)=0$.
\end{theorem}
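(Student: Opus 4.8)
The plan is to reduce the Lie-ring statement to the field case of Theorem \ref{Z1} by changing scalars, and then to translate the resulting nilpotency into a bound on the exponent of a fixed term of the lower central series. Since the asserted conclusion $e\gamma_{c}(L)=0$ passes to quotients, I would first replace $L$ by the universal (relatively free) Lie ring $R$ of the prescribed type: the free Lie ring on $m$ generators, factored by the $T$-ideal of the identity $f$ and by the relations expressing that every commutator in the generators is ad-nilpotent of index at most $n$. All these relations are homogeneous, so $R$ is graded, $R=\bigoplus_{d\ge1}R_{d}$, with each component $R_{d}$ a finitely generated abelian group; a bound for $R$ transfers to every quotient, in particular to $L$. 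Throughout I use the elementary identity $\gamma_{c}(R\otimes_{\mathbf Z}S)=\gamma_{c}(R)\otimes_{\mathbf Z}S$ for a commutative ring $S$ (in the form that $\gamma_{c}(R\otimes S)$ is the image of $\gamma_{c}(R)$), valid because Lie brackets are bilinear.

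Extending scalars to the rationals, the Lie algebra $R\otimes_{\mathbf Z}\mathbf Q$ satisfies $f\equiv0$, is generated by the images of the $m$ generators, and each commutator in these images remains ad-nilpotent of index at most $n$. Theorem \ref{Z1} with $K=\mathbf Q$ therefore yields that $R\otimes_{\mathbf Z}\mathbf Q$ is nilpotent of class less than some $c=c(f,m,n)$, whence $\gamma_{c}(R)\otimes_{\mathbf Z}\mathbf Q=0$ and $\gamma_{c}(R)$ is a torsion group. Grading shows that each component $\gamma_{c}(R)\cap R_{d}$ is finitely generated and torsion, hence \emph{finite}.

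Reducing modulo a prime $p$ that does not divide the content of $f$, the word $f$ reduces to a nonzero identity of $R/pR=R\otimes_{\mathbf Z}\mathbf F_{p}$, while the generation and ad-nilpotency hypotheses survive. Theorem \ref{Z1} with $K=\mathbf F_{p}$ gives that $R/pR$ is nilpotent, and since $\gamma_{c}(R/pR)$ is the image of $\gamma_{c}(R)$ this means $\gamma_{c}(R)\subseteq pR$ for every such ``good'' prime $p$. The first obstacle appears here: the class bound furnished by Theorem \ref{Z1} depends a priori on the field, and one must arrange it to be uniform in $p$, so that a single $c$ serves simultaneously for $\mathbf Q$ and for all good $\mathbf F_{p}$.

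It remains to upgrade this information to a bounded exponent, and this is the heart of the matter. The containments $\gamma_{c}(R)\cap R_{d}\subseteq pR_{d}$ for all good $p$, together with finiteness of $\gamma_{c}(R)\cap R_{d}$, must be sharpened to the divisibility $\gamma_{c}(R)=p\gamma_{c}(R)$ at each good prime $p$, since a finite abelian group $A$ with $A=pA$ has trivial $p$-component; I would obtain this by a local analysis at $p$ (a Nakayama-type argument on the associated graded component after localizing at $p$), which I expect to be the main difficulty. Once the good primes are eliminated, $\gamma_{c}(R)$ is supported on the finite set $\pi(f)$ of primes dividing the content of $f$. For these finitely many exceptional primes the reduction argument is unavailable, since $f$ degenerates modulo $p$; they are handled by a separate examination of the $p$-local structure, which bounds the exponent contributed by each. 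Taking $e$ to be the product of these bounded prime-power contributions and $c$ as in the rational step gives $e\gamma_{c}(R)=0$, and passing back to the quotient yields $e\gamma_{c}(L)=0$ with $e,c$ depending only on $f,m,n$, as required.
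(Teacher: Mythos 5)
First, a point of reference: the paper does not prove this statement at all --- it quotes it as a particular case of \cite[Proposition 2.6]{shusa} --- so your attempt is being compared with that cited proof, whose skeleton (relatively free Lie ring, tensoring with $\mathbf{Q}$, reduction modulo primes) is essentially the one you chose. The skeleton is right, but three load-bearing steps are missing, and the first is an outright error. Scalar extension to $\mathbf{Q}$ does not preserve a general polynomial identity: unlike $R/pR$, the algebra $R\otimes_{\mathbf{Z}}\mathbf{Q}$ is not a quotient of $R$, and a non-multilinear identity that vanishes on the spanning set $R\otimes 1$ need not vanish on $\mathbf{Q}$-linear combinations; for the same reason the $T$-ideal of a non-multilinear $f$ over $\mathbf{Z}$ need not be graded, which undercuts your grading of $R$. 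The repair is to replace $f$ at the outset by its full linearization $\hat f$: polarization uses only integral substitutions, so $\hat f\equiv 0$ on $R$; $\hat f$ is multilinear, hence survives both base changes and generates a graded ideal; and $\hat f\neq 0$ because the diagonal substitution returns a nonzero integer multiple of (the top component of) $f$, and the free Lie ring is torsion-free. Second, the uniformity in $p$ which you flag as an ``obstacle'' must actually be proved, since Theorem \ref{Z1} as stated lets the class bound depend on the field $K$. For a fixed nonzero multilinear identity this dependence can be removed (e.g.\ by an ultraproduct argument: unboundedly bad examples over fields $K_i$ would yield, over a fixed $\mathbf{F}_p$ or over a characteristic-zero ultraproduct field, a finitely generated counterexample to Theorem \ref{Z1992}); without some such argument you have no single $c$ serving $\mathbf{Q}$ and all good $\mathbf{F}_p$ simultaneously.

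The third gap is the most serious, because it is exactly where the integer $e$ must come from: for the bad primes you offer only ``a separate examination of the $p$-local structure,'' which is not an argument. No direct examination will work: for $p$ dividing the content you have no identity over $\mathbf{F}_p$, the components $R_d$ with $d\geq c$ are finite but infinite in number, and nothing bounds their $p$-exponents uniformly in $d$. What finishes the proof is a different idea: write $\hat f=q\,g$ with $q$ the content and $g$ of content $1$. Then $q$ annihilates the ideal $I$ of $R$ generated by all $g$-values, since every element of $I$ is a sum of iterated brackets each containing some $g(\vec u)$ as an entry and $q\,g(\vec u)=0$. The quotient $R/I$ satisfies the content-one multilinear identity $g\equiv 0$, for which \emph{every} prime is good, so the $\mathbf{Q}$- and $\mathbf{F}_p$-steps give $\gamma_c(R/I)=0$ outright; hence $\gamma_c(R)\subseteq I$ and $q\,\gamma_c(R)=0$, so one may take $e=q$. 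Finally, note that the step you single out as ``the heart of the matter'' --- upgrading $\gamma_c(R)\subseteq pR$ to triviality of $p$-torsion --- needs no Nakayama-type local analysis: since $R$ is graded and generated in degree one, $\gamma_c(R)=\bigoplus_{d\geq c}R_d$, so $\gamma_c(R)\subseteq pR$ says precisely that $R_d=pR_d$ for $d\geq c$, and a finite abelian group equal to $p$ times itself has trivial $p$-part. In short, the difficulty is not where you placed it, and the two places where the theorem is actually won (uniformity in $p$, and the bad primes) are left unproved.
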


\section{Proof of the main theorem}

It will be convenient first to prove Theorem \ref{main} in the particular case where $w=\delta_k$ is a derived word. Recall that the derived words $\delta_k$, on $2^k$ variables, are defined recursively by 
\begin{center}

$\delta_0=x_1 \text { and } \delta_k=[\delta_{k-1}(x_1,\ldots,x_{2^{k-1}}),\delta_{k-1}(x_{2^{k-1}+1},\ldots,x_{2^{k}})]$ for $k\geq 1$.
\end{center}

The verbal subgroup corresponding to the word $\delta_k$ in a group $G$ is the familiar $k$th term of the derived series of $G$ denoted by $G^{(k)}$.

\begin{lemma}\label{delta}
Let $m,n,k$ be positive integers, and let $G$ be a finite group with subgroups $G_1,\ldots,G_m$ whose union contains all $\delta_k$-values in $G$. If all elements of the subgroups $G_1,\ldots,G_m$ are $n$-Engel in $G$, then $G^{(k)}$ is $s$-Engel for some $\{k,m,n\}$-bounded number $s$.
\end{lemma}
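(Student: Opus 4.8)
The plan is to reduce the Engel conclusion to a bound on the nilpotency class of $G^{(k)}$ and then to obtain that bound by Lie-theoretic means. First I would observe that, since every element of each $G_i$ is $n$-Engel, every $\delta_k$-value is a bounded Engel element of the finite group $G$ and hence lies in the Fitting subgroup $F(G)$ by Baer's theorem (the same circle of ideas as \cite[Theorem 12.3.7]{Rob}); as the $\delta_k$-values generate $G^{(k)}$, this yields $G^{(k)}\le F(G)$, so $G^{(k)}$ is nilpotent. Next, $G^{(k)}$ is normal in $G$, so for $x\in G^{(k)}$ and $y\in G$ one has $[y,x]\in G^{(k)}$; consequently, if $G^{(k)}$ has nilpotency class $c$ then $[y,{}_{c+1}x]\in\gamma_{c+1}(G^{(k)})=1$, i.e. $G^{(k)}$ is $(c+1)$-Engel in $G$. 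Thus it suffices to bound the class of $G^{(k)}$ by a $\{k,m,n\}$-bounded number. As $G^{(k)}$ is nilpotent, it is the direct product of its Sylow subgroups, and so it is enough to bound, uniformly in $p$, the class of a Sylow $p$-subgroup $P$ of $G^{(k)}$.

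Fix $p$ and let $P$ be as above. The $p$-part of a $\delta_k$-value is a power of it, so it lies in the same $G_i$ and is again $n$-Engel; hence $P$ is generated by $n$-Engel elements lying in the subgroups $G_i\cap P$. I would then pass to the Lie algebra $L=L_p(P)$ over the field with $p$ elements. Its first homogeneous component $P/\Phi(P)$ is spanned by the images of $\bigcup_{i}(G_i\cap P)$, so $L$ is generated by the subspaces $V_1,\dots,V_m$, where $V_i$ is the image of $G_i\cap P$. Since $G_i\cap P$ is a subgroup consisting of $n$-Engel elements, every element of $V_i$ is the image of an $n$-Engel element, and by the standard passage from the Engel condition to ad-nilpotency each such image is ad-nilpotent of $\{n\}$-bounded index, uniformly in $p$. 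In this way $L$ becomes a Lie algebra generated by $m$ subspaces, each consisting of ad-nilpotent elements of bounded index.

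The heart of the matter is to conclude that such an $L$ is nilpotent of $\{m,n\}$-bounded class. Here I would apply Zelmanov's nilpotency theorem (Theorems \ref{Z1992} and \ref{Z1}), in the version suited to a generating family of ad-nilpotent subspaces. This requires a polynomial identity on $L$, and producing it is the step I expect to be the main obstacle: ad-nilpotency of the generating subspaces by itself does not force nilpotency, since a sum of elements taken from different $V_i$ need not be ad-nilpotent, so a genuine identity has to be supplied. I would derive it from the Engel structure of the covering subgroups. Each $G_i$ is a finite $n$-Engel group, so by the Burns--Medvedev theorem (Theorem \ref{BM}) it is nilpotent-by-(bounded exponent) and therefore obeys a bounded law; feeding this into the Wilson--Zelmanov machinery relating laws on a group to polynomial identities on its associated Lie algebra should furnish a polynomial identity for $L=L_p(P)$ whose degree is $\{m,n\}$-bounded.

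Granting the identity, Zelmanov's theorem gives that $L_p(P)$, and hence $P$, is nilpotent of $\{m,n\}$-bounded class. The decisive point to verify throughout is that every constant involved --- the ad-nilpotency index, the degree of the identity, and the resulting class --- is independent of $p$; this uniformity is exactly what makes the class of $G^{(k)}=\prod_pP$ bounded in terms of $k,m,n$ only, and the reduction of the first paragraph then shows that $G^{(k)}$ is $s$-Engel for some $\{k,m,n\}$-bounded $s$. I anticipate the two delicate ingredients to be the uniform-in-$p$ translation of the $n$-Engel condition into bounded ad-nilpotency and, above all, the extraction of a polynomial identity from the covering data.
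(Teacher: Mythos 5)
You cannot carry out your first reduction: bounding the nilpotency class of $G^{(k)}$ in terms of $k,m,n$ is strictly stronger than the lemma's conclusion, and it is in fact false. Take $E=(C_2)^n$, $W=C_2\wr E$ and $G=C_2\wr W$, finite $2$-groups. Writing the base of $W$ as the group algebra $B=GF(2)[E]$, we have $W'\leq B$, and for $c\in B$ and $x\in W$ with image $e\in E$ one computes $[c,x,x]=c(e-1)^2=c(e^2-1)=0$; hence $W$ is $3$-Engel. Similarly $G/A\cong W$ for the base $A=GF(2)[W]$, so $[y,{}_3\,x]\in A$ for all $x,y\in G$, and $[c,{}_4\,x]=c(w-1)^4=c(w^4-1)=0$ for $c\in A$, since $W$ has exponent $4$; hence $G$ is $7$-Engel. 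Thus the hypotheses of the lemma hold with $m=1$, $G_1=G$, $n=7$, $k=1$. However, $G'=A\omega(W)W'$ (where $\omega$ denotes augmentation ideals) has class at least $2^n-2$: the subgroup $W'$ is elementary abelian of rank $2^n-1$, and $\gamma_{j+1}(G')\supseteq A\omega(W)\,\omega(W')^j\supseteq\omega(W')^{j+1}\neq 0$ whenever $j+1\leq 2^n-1$. (Such wreath-product examples are the reason Theorem \ref{BM} is a class-by-exponent statement rather than a bounded-class statement.) The paper therefore never tries to bound the class of $G^{(k)}$ or of its Sylow subgroups. Instead, for a \emph{fixed} pair $a,b\in G^{(k)}$ it writes $G^{(k)}=G_1\cdots G_m$ (Lemma \ref{lemmaSA}, which needs the commutator-closedness of the set of $\delta_k$-values) and bounds the class of the subgroup $H$ generated by the $2m$ factors of $a$ and $b$; it is the bounded number of $n$-Engel generators that makes Lemma \ref{gru} applicable, and the $s$-Engel conclusion requires nothing more than this two-generator information.

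The Lie-theoretic core of your plan also fails. There is no version of Zelmanov's theorem for ``a generating family of ad-nilpotent subspaces'': the algebra $\mathfrak{sl}_2(K)$ is generated by the subspaces $Ke$ and $Kf$, every element of which is ad-nilpotent of index $3$, and it satisfies a polynomial identity (being finite-dimensional), yet it is not nilpotent; the obstruction is exactly that $[e,f]$ is not ad-nilpotent. Theorems \ref{Z1992} and \ref{Z1} require ad-nilpotency of \emph{every commutator in the generators}, and in your setting a commutator $[g_i,g_j]$ with $g_i\in G_i$, $g_j\in G_j$ need not lie in any $G_l$, so nothing supplies that hypothesis. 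The paper manufactures it by generating with images of $\delta_k$-values: since that set is commutator-closed, every Lie commutator in these generators is again the image of a $\delta_k$-value, hence of an $n$-Engel element, hence ad-nilpotent of index at most $n$. Your source for the polynomial identity is equally problematic: the Wilson--Zelmanov translation turns a law of the group $P$ itself into an identity of its Lie algebra, and a law holding in each covering subgroup $G_i$ (via Theorem \ref{BM}) is not a law of $P$. The identity actually available is the law $[y,{}_n\,\delta_k(x_1,\ldots,x_{2^k})]\equiv 1$, which holds in all of $G$ because every $\delta_k$-value lies in some $G_i$; this is what the paper uses. Finally, even granting bounded-class nilpotency of $L_p(P)$, that does not bound the class of $P$ (only $L(P)$ has this property); it yields, via Theorem \ref{finiteLazard}, merely a powerful subgroup of bounded index, which is why the paper must follow with its separate powerful-$p$-group argument controlling derived length rather than class.
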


A subset $X$ of a group $G$ is called commutator-closed if $[x,y]\in X$ whenever $x,y\in X$. The fact that in any group the set of all $\delta_k$-values is commutator-closed will be used without explicit references.

The proof of Lemma \ref{delta} will require the following two lemmas which  were obtained in \cite[Lemma 3.1]{as} and \cite[4.1]{shusa}, respectively.

\begin{lemma}\label{lemmaSA}
Let $G$ be a nilpotent group generated by a commutator-closed subset $X$ which is contained in a union of finitely many  subgroups $G_1,G_2,\ldots,G_m$. Then $G=G_1G_2\cdots G_m$.
\end{lemma}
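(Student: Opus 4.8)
The plan is to argue by induction on the nilpotency class $c$ of $G$, peeling off the top central layer $\gamma_c(G)$, treating the quotient by the inductive hypothesis, and handling $\gamma_c(G)$ directly. For the base case $c=1$, the group $G$ is abelian, so the $G_i$ commute and $G_1G_2\cdots G_m$ is already a subgroup; it contains each $G_i$, hence the generating set $X\subseteq G_1\cup\cdots\cup G_m$, so $G=\langle X\rangle\subseteq G_1\cdots G_m\subseteq G$ and we get equality. I record for later use that in this abelian situation every element of $G$ genuinely factors as $h_1\cdots h_m$ with $h_i\in G_i$.

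For the inductive step I would set $Z=\gamma_c(G)$, which is central, and first pass to $\bar G=G/Z$. The image $\bar X$ is again commutator-closed, since commutators map to commutators; it generates $\bar G$; and it is covered by the subgroups $\bar G_i=G_iZ/Z$. As $\bar G$ has smaller nilpotency class, the inductive hypothesis gives $\bar G=\bar G_1\cdots\bar G_m$. Lifting this factorization, every $g\in G$ can be written $g=g_1\cdots g_m z$ with $g_i\in G_i$ and $z\in Z$. It therefore remains to absorb the central factor $z$ into the product $g_1\cdots g_m$.

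The crucial point is that commutator-closedness lets me control $Z$ itself. Indeed $\gamma_c(G)$ is generated by the left-normed commutators $[a_1,\dots,a_c]$ with $a_j\in X$, and repeated use of the hypothesis $[x,y]\in X$ shows that each such commutator again lies in $X$; hence $Z=\langle X\cap Z\rangle$. Now $Z$ is abelian and $X\cap Z$ is covered by the subgroups $G_i\cap Z$, so the abelian base case applies and yields $Z=(G_1\cap Z)\cdots(G_m\cap Z)$. Consequently the central element $z$ above factors as $z=z_1\cdots z_m$ with each $z_i\in G_i\cap Z$ central. Centrality then finishes the job: I can slide each $z_i$ next to $g_i$ and write $g=(g_1z_1)(g_2z_2)\cdots(g_mz_m)$, where $g_iz_i\in G_i$, which exhibits $g\in G_1\cdots G_m$ and completes the induction.

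The step I expect to be the real obstacle, and the reason the bare inclusion $Z\subseteq G_1\cdots G_m$ does not suffice, is precisely this reordering. A naive substitution of a factorization of $z$ produces a product of $2m$ elements in the wrong order, which need not lie in $G_1\cdots G_m$; it is only the fact that the $\gamma_c(G)$-factors $z_i$ are simultaneously central and each confined to a single $G_i$ that allows them to be merged into the correct one-factor-per-subgroup form. Getting both of these properties at once is exactly what commutator-closedness (yielding $Z=\langle X\cap Z\rangle$) plus the choice $Z=\gamma_c(G)$ buys us.
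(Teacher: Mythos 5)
Your proof is correct and takes essentially the same route as the proof the paper relies on (the lemma is quoted from \cite[Lemma 3.1]{as}): induction on the nilpotency class, using the fact that commutator-closedness of $X$ forces $\gamma_c(G)$ to be generated by the weight-$c$ left-normed commutators in elements of $X$, so that the central factor can be split as $(G_1\cap\gamma_c(G))\cdots(G_m\cap\gamma_c(G))$ and absorbed into the product. Your closing remark correctly identifies the point of the argument, namely that centrality plus one-subgroup-per-factor is what permits the reordering.
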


\begin{lemma}\label{gru} 
Let $G$ be a group generated by $m$  elements which are $n$-Engel. 
If $G$ is soluble with derived length $d$, then $G$ is nilpotent of
 $\{d, m, n\}$-bounded class.
\end{lemma}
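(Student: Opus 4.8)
The plan is to argue by induction on the derived length $d$, reducing at each stage to a statement about how the augmentation ideal of a nilpotent quotient acts on an abelian normal section. For $d=1$ the group is abelian and there is nothing to prove, so suppose $d>1$ and that the lemma holds for derived length $d-1$. Put $A=G^{(d-1)}$, the last nontrivial term of the derived series; it is an abelian normal subgroup, and $G/A$ is soluble of derived length $d-1$ and generated by the $m$ images of the given generators, which are still $n$-Engel. By the inductive hypothesis $\bar G:=G/A$ is nilpotent of some $\{d,m,n\}$-bounded class $c_0$. Since $A$ is abelian it centralizes itself, so the conjugation action of $G$ on $A$ factors through $\bar G$, and $A$ becomes a module over $\mathbb{Z}[\bar G]$ on which the generators $\bar a_1,\dots,\bar a_m$ act. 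Writing $A$ additively, the relation $[y,{}_n a_i]=1$ applied to $y\in A$ reads $y(\bar a_i-1)^n=0$, so each operator $x_i:=\bar a_i-1$ satisfies $x_i^{\,n}=0$ on $A$.

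Everything now hinges on one assertion: writing $\omega$ for the augmentation ideal of $\mathbb{Z}[\bar G]$, there is a $\{c_0,m,n\}$-bounded integer $c$ with $A\omega^{c}=0$. Granting this, $[A,{}_cG]=0$; since $\gamma_{c_0+1}(G)\le A$ we get $\gamma_{c_0+1+c}(G)=[\gamma_{c_0+1}(G),{}_cG]\le[A,{}_cG]=0$, so $G$ is nilpotent of class at most $c_0+c$, a $\{d,m,n\}$-bounded number, and the induction closes. The assertion is transparent when $c_0=1$: then $\bar G$ is abelian, the $x_i$ commute, and any product of more than $m(n-1)$ of them contains some $x_i$ to a power $\ge n$ and hence vanishes, giving $A\omega^{m(n-1)+1}=0$. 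The whole difficulty is to obtain such a bound when the $x_i$ no longer commute.

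For the general nilpotent case I would invoke the Lie-theoretic machinery of the previous section. Let $R\subseteq\mathrm{End}(A)$ be the associative ring generated by $x_1,\dots,x_m$ and let $L$ be the Lie subring they generate under $[u,v]=uv-vu$; the desired conclusion $A\omega^c=0$ is precisely the statement that $R$ is associatively nilpotent of bounded index. Using the group-ring identity $[g,h]-1=g^{-1}h^{-1}\,[g-1,h-1]$ together with $\gamma_{c_0+1}(\bar G)=1$, and passing to the graded ring of the $\omega$-adic filtration in the manner of the Zassenhaus--Jennings--Lazard construction, one should obtain that $L$ is Lie-nilpotent of $\{c_0,m,n\}$-bounded class; Theorems~\ref{Z1} and \ref{BazaRing} provide the quantitative engine for this once the generators are seen to be ad-nilpotent of index $\le n$ and a polynomial identity for $L$ is secured. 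It then remains to convert Lie nilpotency of $L$ together with the relations $x_i^{\,n}=0$ into associative nilpotency of $R$ of bounded index, by a collection argument that sorts a long product $x_{i_1}\cdots x_{i_N}$ modulo higher Lie commutators (which vanish once $N$ exceeds a bound set by the Lie class) and then annihilates the sorted monomials of large degree using $x_i^{\,n}=0$. I expect the principal difficulties to be twofold: securing the polynomial identity and the Lie-nilpotency bound in a way independent of the particular module $A$, and then making the associative bound \emph{uniform} over $\mathbb{Z}$ rather than merely over a field. The latter I would handle by reducing---via P.\ Hall's theorem that $A$ is a Noetherian $\mathbb{Z}[\bar G]$-module and the residual finiteness of the polycyclic group $\bar G$---to the $p$-primary and torsion-free quotients of $A$, applying Theorem~\ref{BazaRing} to keep the resulting integer coefficients under control.
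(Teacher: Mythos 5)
Your reduction is sound as far as it goes: the induction on $d$, the module interpretation $x_i^{\,n}=0$ on $A$, the identity $[A,{}_c\,G]=A\omega^c$, and the pigeonhole bound $A\omega^{m(n-1)+1}=0$ when $\bar G$ is abelian are all correct. But the entire content of the lemma sits in the case $c_0\ge 2$, and there your argument is a programme with several steps that fail as stated. First, $A\omega^c=0$ is \emph{not} ``precisely'' the associative nilpotency of the ring $R$ generated by $x_1,\dots,x_m$: pushing group elements through via $x_i g=g(a_i^g-1)$ shows that $A\omega^c$ is spanned by the sets $A(b_1-1)\cdots(b_c-1)$ with the $b_j$ ranging over \emph{all conjugates} of the generators, so the relevant ring has infinitely many generators and the finite-generation hypotheses of Theorems \ref{Z1} and \ref{BazaRing} are not met for it. Second, even for $R$ itself those theorems do not apply: they require \emph{every commutator in the generators} to be ad-nilpotent, whereas $x_i^{\,n}=0$ only makes the $x_i$ themselves ad-nilpotent (of index at most $2n-1$); nothing bounds the ad-nilpotency index of $[x_i,x_j]$. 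Third, no polynomial identity for $L$ is ever secured, and none is in sight: the only constraint supplied by $\gamma_{c_0+1}(\bar G)=1$ is, via your identity $[g,h]-1=g^{-1}h^{-1}[g-1,h-1]$, a congruence modulo higher powers of $\omega$, i.e.\ a statement about the graded object of the $\omega$-adic filtration; inside $\mathrm{End}(A)$ that filtration need not be separated, and its termination is exactly the conclusion $A\omega^c=0$ you are after, so ``passing to the graded ring'' is circular. The closing collection argument inherits the second defect: after sorting, long monomials contain high powers of basis Lie commutators of weight at least $2$, for which no nilpotency index is known; and the appeal to P.~Hall's Noetherian theorem in the last sentence is a gesture, not an argument.

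For comparison: the paper does not prove Lemma \ref{gru} internally at all --- it quotes \cite[Lemma 4.1]{shusa} --- and the proof there is far more elementary than your programme. Qualitative nilpotency is immediate from Gruenberg's theorem that in a soluble group the left Engel elements lie in the Hirsch--Plotkin radical (see \cite[12.3]{Rob}), so any group satisfying the hypotheses is nilpotent; the \emph{bound} then comes for free from a universal-object argument: with $F$ free of rank $m$, the quotient of $F$ by the normal subgroup generated by $F^{(d)}$ together with all values $[f,{}_n\,x_i]$, $f\in F$, itself satisfies the hypotheses, hence is nilpotent of some class $c=c(d,m,n)$, and every group as in the lemma is a homomorphic image of it. If you insist on a genuinely quantitative module-theoretic proof along your lines, the missing inductive engine is Gruenberg's stability-type induction on the class $c_0$, exploiting that \emph{all} conjugates $b$ of the generators satisfy $(b-1)^nA=0$ --- not Zelmanov's machinery, which is both inapplicable here and unnecessary.
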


The proof of Lemma \ref{delta} requires the concept of powerful $p$-groups. A finite $p$-group $P$ is said to be powerful if and only if $[P,P]\leq P^p$ for $p\neq 2$ (or $[P,P]\leq P^{4}$ for $p=2$), where $P^{i}$  denotes   the subgroup of $P$ generated by all $i$th powers. If $P$ is a powerful $p$-group, then the subgroups $\gamma_{i}(P), P^{(i)}$ and $P^{i}$ are also powerful. Moreover, for given positive integers $n_1,\ldots,n_j$, it follows, by repeated applications of \cite[Propositions 1.6 and 4.1.6]{luma}, that $$[P^{n_1},\ldots,P^{n_j}]\leq \gamma_j(P)^{n_{1}\cdots n_j}.$$ Furthermore if a powerful $p$-group $P$ is generated by $d$ elements, then any subgroup of $P$ can be generated by at most $d$ elements and $P$ is a product of $d$ cyclic subgroups. For more details we refer the reader to \cite[Chapter 11]{Kh}.

\begin{proof}[{\bf Proof of Lemma \ref{delta}}]
By the hypothesis, each $\delta_k$-value is $n$-Engel in $G$. Hence, Baer's theorem \cite[Theorem 12.3.7]{Rob} implies that $G^{(k)}$ is nilpotent. Replacing if necessary $G_i$ by $G_i\cap G^{(k)}$, we can assume that all subgroups $G_i$ are contained in $G^{(k)}$. Then, by Lemma \ref{lemmaSA},  $G^{(k)}=G_1G_2\cdots G_m$.  

Choose arbitrary elements $a,b\in G^{(k)}$. It is sufficient to show that the subgroup $\langle a,b\rangle$ is nilpotent of $\{m,n,k\}$-bounded class. Write $a=a_1\cdots a_m$ and $b=b_1\cdots b_m$, where $a_i$ and $b_i$ belong to $G_i$. Let $H$ be the subgroup generated by the elements $a_i, b_i$ for $i=1,\ldots, m.$ Since the subgroup $\langle a,b\rangle$ is contained in $H$, it is enough to show that $H$ is nilpotent of $\{m,n,k\}$-bounded class. Observe that the generators $a_i,b_i$ of $H$ are $n$-Engel elements. Thus, in view of Lemma \ref{gru}, it is sufficient to prove that $H$ has $\{m,n,k\}$-bounded derived length. Since $H$ is nilpotent, we need to show that each Sylow $p$-subgroup $P$ of $H$ has $\{m,n,k\}$-bounded derived length.

Obviously, $P$ can be generated by 2$m$ elements each of which is $n$-Engel.  Set $R=P^{(k)}$. We will now prove that $R$ can be generated by $\{m,n,k\}$-boundedly many, say $r$, elements. Note that by Burnside Basis Theorem \cite[Theorem 5.3.2]{Rob}, it is sufficient to show that the Frattini quotient $R/\Phi(R)$ has  $\{m,n,k\}$-boundedly  many generators. The quotient $P/\Phi(R)$ has derived length at most $k+1$. Thus, Lemma \ref{gru} implies that  $P/\Phi(R)$  has $\{m,n,k\}$-bounded nilpotency class. It follows that $R/\Phi(R)$ can be generated with $\{m,n,k\}$-boundedly  many elements. This is also true for $R$.

Next, we will show that $R$ has $\{m,n,k\}$-bounded derived length. By the Burnside Basis Theorem, $R$ is generated by $r$ $\delta_k$-values which are $n$-Engel elements. Let $L_1=L(R)$ be the Lie ring associated to $R$ using the lower central series. The proof of \cite[Theorem 1]{WZ} shows that since $R$ satisfies the identity $[y,_n\delta_k(x_1,\ldots,x_{2^k})]\equiv1$, the Lie ring $L_1$ satisfies the linearized version of the identity $[y,_n\delta_k(x_1,\ldots,x_{2^k})]\equiv0$. Further, each commutator in the generators of $L_1$ corresponding to $\delta_k$-values in $R$ is ad-nilpotent of index at most $n$. By Theorem \ref{BazaRing}, there exist positive integers $e$ and $c$, depending only on $k,m$ and $n$, such that $e\gamma_c(L_1) = 0$. If $p$ is not a divisor of $e$, we have $\gamma_c(L_1) = 0$ and so the group $R$ is nilpotent of class at most $c-1$. In what follows we assume that $p$ is a divisor of $e$. Note that in this case $p$ is bounded in terms of $k,m$ and $n$.

Let $L_2=L_p(R)$ be the Lie algebra associated to $R$ using the $p$-dimensional series. Applying Theorem \ref{Z1} we deduce that $L_2$ is nilpotent with $\{m,n,k\}$-bounded nilpotency class. Hence, by Theorem \ref{finiteLazard}, $R$ has a powerful subgroup $N$ of $\{m,n,k\}$-bounded index. It is now sufficient to show that $N$ has $\{m,n,k\}$-bounded derived length. 

Since the index of $N$ in $R$ is $\{m,n,k\}$-bounded, it follows that $N$ can be generated with $\{m,n,k\}$-boundedly many elements, say $t$. Taking into account that $N$ is powerful, we deduce that all subgroups of $N$ can be generated by at most $t$ elements, and the $k$th derived subgroup $N^{(k)}$ is also powerful. We now look at the Lie ring $L(N^{(k)})$ associated to $N^{(k)}$.

By Theorem \ref{BazaRing}, there exist positive integers $e_1,c_1$ depending only on $k,m$ and $n$, such that $e_1\gamma_{c_1}(L(N^{(k)})) = 0$. Since $P$ is a $p$-group, we can assume that $e_1$ is a $p$-power. Set $R_1=(N^{(k)})^{e_1^{2^k}}=(N^{e_1})^{(k)}$.  

Note that if $p\neq 2$, then 
$$[R_1,R_1]\leq [N^{(k)},N^{(k)}]^{e_1^{2^k}e_1^{2^k}}\leq (N^{(k)})^{pe_1^{2^k}e_1^{2^k}}=R_1^{pe_1^{2^k}}.$$
If $p=2$, then we have $$[R_1,R_1]\leq R_1^{4e_1^{2^k}}.$$

Since $e_1\gamma_{c_1}(L(R_1)) = 0$, we deduce that $\gamma_{c_1}(R_1)^{e_1}\leq \gamma_{c_1+1}(R_1)$. Taking into account that $R_1$ is powerful, if $p\neq2$ we obtain that $$\gamma_{c_1}(R_1)^{e_1}\leq \gamma_{c_1+1}(R_1)=[R'_1,_{c_1-1}R_1]\leq [R_1^{pe_1^{2^k}},_{c_1-1}R_1]\leq \gamma_{c_1}(R_1)^{pe_1^{2^k}}$$
If $p=2$, we obtain that
$$\gamma_{c_1}(R_1)^{e_1}\leq\gamma_{c_1}(R_1)^{4e_1^{2^k}}.$$
Hence, $\gamma_{c_1}(R_1)^{e_1}=1$. Since $\gamma_{c_1}(R_1)$ is powerful and generated by at most $t$ elements, we conclude that $\gamma_{c_1}(R_1)$ is a product of at most $t$ cyclic subgroups.  Hence the order of $\gamma_{c_1}(R_1)$ is at most $e_1^t$. It follows that the derived length of $R_1$ is $\{k,m,n\}$-bounded. Recall that $N^{(k)}$ is a powerful $p$-group and $R_1 =(N^{(k)})^{e_1^{2^k}}$. It follows that the derived length of $N^{(k)}$ is $\{k,m,n\}$-bounded. Hence, the derived length of $P$ is $\{k,m,n\}$-bounded, as required. The proof is now complete.
\end{proof}

The next lemma is well-known (see for example \cite[Lemma 4.1]{S-outer} for a proof).
\begin{lemma}\label{delta-outer} Let $G$ be a group and $w$ a multilinear commutator word of weight $k$. Then every $\delta_k$-value in $G$ is a $w$-value.
\end{lemma}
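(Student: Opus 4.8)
The plan is to argue by induction on the weight $k$ of $w$, after first isolating a purely combinatorial fact about derived-word values that carries the real content. The fact I would establish at the outset is this: for all integers $0\le i\le j$, every $\delta_j$-value in $G$ is also a $\delta_i$-value. This rests on the identity
$$\delta_j(x_1,\dots,x_{2^j}) = \delta_i\bigl(\delta_{j-i}(\cdots),\dots,\delta_{j-i}(\cdots)\bigr),$$
in which the $2^i$ arguments of $\delta_i$ are filled with $2^i$ copies of $\delta_{j-i}$ on pairwise disjoint sets of variables; this follows immediately by unravelling the recursive definition of the derived words, or by a short induction on $j-i$. Consequently a $\delta_j$-value is obtained by substituting suitable $\delta_{j-i}$-values into the arguments of $\delta_i$, and is therefore a $\delta_i$-value.

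With this in hand I would prove the lemma by induction on $k=\mathrm{weight}(w)$. If $k=1$, then $w=x_1$ and every element of $G$---in particular every $\delta_1$-value---is a $w$-value, so there is nothing to prove. For the inductive step, suppose $k\ge 2$. By the definition of a multilinear commutator word, $w=[u,v]$, where $u$ and $v$ are multilinear commutator words in disjoint sets of variables, of weights $a$ and $b$ with $a+b=k$ and $1\le a,b\le k-1$. Take an arbitrary $\delta_k$-value $g$. Since $\delta_k=[\delta_{k-1},\delta_{k-1}']$, where $\delta_{k-1}$ and $\delta_{k-1}'$ are two copies of $\delta_{k-1}$ on disjoint variables, we may write $g=[p,q]$ with $p$ and $q$ being $\delta_{k-1}$-values. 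Because $a\le k-1$ and $b\le k-1$, the combinatorial fact above shows that $p$ is a $\delta_a$-value and $q$ is a $\delta_b$-value.

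The induction hypothesis, applied to $u$ (of weight $a<k$) and to $v$ (of weight $b<k$), then gives that $p$ is a $u$-value and $q$ is a $v$-value. Substituting the corresponding group elements into the disjoint variable slots of $u$ and of $v$ yields a value of $w=[u,v]$ equal to $[p,q]=g$, so $g$ is a $w$-value; this completes the induction.

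The step I expect to require the most care is the combinatorial fact together with the accompanying weight bookkeeping. One must check that the decomposition $w=[u,v]$ forces both $a$ and $b$ to lie in the range $1\le a,b\le k-1$, so that the induction hypothesis is genuinely available for each factor, and that the balanced top-level split of $\delta_k$ into two $\delta_{k-1}$-values is reconciled, through the displayed identity, with the possibly unbalanced split of $w$ into $u$ and $v$. Everything else is a matter of substituting group elements into disjoint variable slots, which is automatic for multilinear words.
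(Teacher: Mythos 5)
Your proof is correct; since the paper does not actually prove Lemma \ref{delta-outer} but only cites \cite[Lemma 4.1]{S-outer}, the relevant comparison is with that standard argument, and your induction --- decomposing $w=[u,v]$ with weights $a+b=k$, writing a $\delta_k$-value as $[p,q]$ with $p,q$ two $\delta_{k-1}$-values, invoking the fact that every $\delta_j$-value is a $\delta_i$-value for $i\le j$, and applying the induction hypothesis to $u$ and $v$ --- is exactly that proof. Nothing is missing: the weight bookkeeping $1\le a,b\le k-1$, the substitution identity $\delta_j=\delta_i(\delta_{j-i},\dots,\delta_{j-i})$ on disjoint variable blocks, and the disjointness of the variables of $u$ and $v$ (which makes the final substitution consistent) are all handled correctly.
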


The proof of Theorem \ref{main} will require the following result, due to Burns and Medvedev \cite{BM}.

\begin{theorem}\label{BM}
Let $n$ be a positive integer. There exist constants $c$ and $e$  depending only on $n$ such that if $G$ is a finite $n$-Engel group, then the exponent of $\gamma_c(G)$ divides $e$.
\end{theorem}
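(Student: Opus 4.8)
The plan is to reduce to finite $p$-groups and then to transfer the Engel condition to an associated Lie algebra, where Zelmanov's nilpotency theory applies. First I would invoke Baer's theorem \cite[Theorem 12.3.7]{Rob} to conclude that a finite $n$-Engel group $G$ is nilpotent; being nilpotent it is the direct product of its Sylow subgroups, so $\gamma_c(G)$ is the direct product of the subgroups $\gamma_c(P)$ as $P$ runs over the Sylow subgroups. It therefore suffices to produce a single $c=c(n)$ together with, for each prime $p$, a $p$-power bound $e_p=e_p(n)$ on the exponent of $\gamma_c(P)$ for every finite $n$-Engel $p$-group $P$, in such a way that $e_p=1$ for all but finitely many $p$ (the exceptional primes depending only on $n$). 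Then $e=\prod_p e_p$ does the job.

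For a finite $p$-group $P$ I would form the restricted Lie algebra $L=L_p(P)$ over the field $\mathbf{F}_p$, generated by its first homogeneous component. Using the linearization technique from the proof of \cite[Theorem 1]{WZ}, the identity $[y,{}_n x]\equiv 1$ of $P$ yields the linearized $n$-Engel identity in $L$, so that $L$ satisfies a polynomial identity and every homogeneous element of $L$ is ad-nilpotent of $n$-bounded index. The main tool for large primes is Zelmanov's theorem on Engel Lie algebras in its generator-independent form: over a field of characteristic $0$, or of characteristic $p$ exceeding a bound depending only on $n$, an $n$-Engel Lie algebra is nilpotent of class bounded solely in terms of $n$. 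Applying this to $L_p(P)$ for such $p$ and passing back through the Lazard correspondence (for primes larger than the resulting class the $p$-dimension series and the lower central series agree in the relevant range) shows that $P$ is nilpotent of $n$-bounded class $c(n)$; hence $\gamma_c(P)=1$ for all $p$ exceeding a threshold $p_0(n)$, which disposes of all but the finitely many primes $p\le p_0(n)$.

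The remaining case, of the finitely many small primes $p\le p_0(n)$, is the genuinely hard one. Here $L_p(P)$ is still $n$-Engel and satisfies a polynomial identity, but it need not be nilpotent of bounded class: finite $n$-Engel $p$-groups of unbounded nilpotency class do occur for small $p$, which is exactly why the conclusion must be stated in terms of the exponent of $\gamma_c$ rather than its triviality. For a fixed small $p$ I would exploit the finer structure of Engel PI Lie algebras in characteristic $p$ together with the theory of powerful $p$-groups recalled before the proof of Lemma \ref{delta}, aiming to locate a normal subgroup of $P$ that is nilpotent of $n$-bounded class modulo a subgroup of bounded exponent, and then to read off from the relations $[P^{n_1},\ldots,P^{n_j}]\le \gamma_j(P)^{n_1\cdots n_j}$ that $\gamma_c(P)$ is annihilated by a bounded $p$-power. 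The Lie-ring estimates of the type $e\gamma_c(L)=0$ supplied by Theorem \ref{BazaRing} control the exponents of the individual factors $\gamma_i(P)/\gamma_{i+1}(P)$; but such bounds on successive factors do \emph{not} by themselves bound the exponent of $\gamma_c(P)$, since the number of factors grows with the unbounded class.

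Accordingly, the main obstacle I anticipate is precisely this small-prime situation: bounding the exponent of $\gamma_c(P)$ uniformly over all finite $n$-Engel $p$-groups $P$, for the finitely many primes $p\le p_0(n)$, when neither the nilpotency class nor the number of generators of $P$ is bounded. Overcoming it requires Zelmanov's deep structural results on $n$-Engel Lie algebras over $\mathbf{F}_p$, which guarantee that the obstruction to nilpotency is itself of bounded exponent, rather than the purely quantitative finitely generated statements of Theorems \ref{Z1} and \ref{BazaRing} that suffice in the bounded-rank setting of Lemma \ref{delta}.
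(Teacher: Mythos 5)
Your proposal cannot be checked against an internal argument, because the paper offers none: Theorem \ref{BM} is imported verbatim from Burns and Medvedev \cite{BM} and is used as a black box, so the relevant comparison is with their published proof. Measured against that, your skeleton (Baer plus reduction to Sylow $p$-subgroups, then Lie methods) is the right shape, and you correctly locate the difficulty; but the proposal never actually proves the case you yourself call ``the genuinely hard one''. For a fixed small prime $p$ your closing paragraph amounts to asserting that Zelmanov's structural theory ``guarantees that the obstruction to nilpotency is itself of bounded exponent'' --- and for finite $n$-Engel $p$-groups that assertion \emph{is} Theorem \ref{BM}, not a tool available to prove it. Your own analysis shows why the tools you cite cannot close the gap: the constants in Theorems \ref{Z1} and \ref{BazaRing} depend on the number of generators $m$, which is unbounded here; over $\mathbf{F}_p$ with $p$ dividing $e$ the conclusion $e\gamma_c(L)=0$ is vacuous; and the powerful-$p$-group machinery recalled before Lemma \ref{delta} requires bounded rank, again unavailable. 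What is actually needed, and what Burns and Medvedev take from Zelmanov, is the \emph{generator-independent} Lie-ring statement that there exist $c(n)$ and $e(n)$ with $e\gamma_c(L)=0$ for every $n$-Engel Lie ring $L$, together with a genuinely nontrivial group-theoretic transfer --- the real work of \cite{BM} --- converting this into a bound on the exponent of $\gamma_c(G)$ itself rather than merely on the successive quotients $\gamma_i(G)/\gamma_{i+1}(G)$, which is exactly the pitfall you flag and then leave unresolved.

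Even your large-prime half has a flaw as written. You pass from nilpotency of $L_p(P)$ of $n$-bounded class to bounded nilpotency class of $P$ via ``the Lazard correspondence'', but Theorem \ref{finiteLazard} only yields a powerful subgroup of index bounded in terms of $p$, $c$ \emph{and the number of generators} $d$, which is unbounded in this setting; moreover, even for $p$ larger than the class, the $p$-dimension series does not coincide with the lower central series, since its $i$th term contains the power subgroups $\gamma_j(P)^{p^k}$ for $jp^k\geq i$. The clean route for large $p$ --- and essentially the one underlying \cite{BM} --- bypasses $L_p(P)$: apply the generator-independent bound $e\gamma_c(L)=0$ to the Lie ring $L(P)$ of the lower central series (the group identity $[y,{}_n\,x]\equiv 1$ linearizes as in \cite{WZ}); when $p$ does not divide $e$ this forces $\gamma_c(L(P))=0$, hence $P$ is nilpotent of class less than $c$, exactly as the paper argues inside the proof of Lemma \ref{delta} for primes not dividing $e$. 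So: correct strategy in outline and an honest diagnosis of where the depth lies, but the theorem's hard core --- the finitely many small primes --- is assumed rather than proved, and the large-prime reduction needs repair.
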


Another useful result which we will need is the next theorem \cite[Theorem B]{FAM}.

\begin{theorem}\label{FAM}
Let $w$ be a multilinear commutator word, and let $G$ be a soluble group. Then there exists a series of subgroups from 1 to $w(G)$ such that:
\begin{itemize}
\item all subgroups of the series are normal in $G$;
\item every section of the series is abelian and can be generated by $w$-values all of whose powers are also $w$-values.
\end{itemize}
Furthermore, the length of this series only depends on the word $w$ and on the derived length of $G$.
\end{theorem}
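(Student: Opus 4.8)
The plan is to argue by induction on the derived length $d$ of $G$, peeling off the last nontrivial term $A=G^{(d-1)}$ of the derived series, which is abelian and characteristic (hence normal) in $G$. Writing $V=w(G)$, the natural break point is $V\cap A$: the quotient $V/(V\cap A)$ is isomorphic to $w(G)A/A=w(G/A)$, the verbal subgroup of $G/A$, a group of derived length $d-1$; while $V\cap A$ sits inside the abelian group $A$. The base case $d\le 1$ is immediate. If $w$ has weight $1$, then $V=G$ is abelian and the single section $1\le G$ is generated by all of its elements, every power of which is again an element and so a $w$-value; if $w$ has weight at least $2$, then $V\le G'=1$ and the series is empty.

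For the top of the series I would invoke the inductive hypothesis on $G/A$ to obtain a $G/A$-normal series from $1$ to $w(G/A)$ of length $r(w,d-1)$ with the stated properties, and pull it back along $G\to G/A$. Intersecting the preimages with $V$ yields a $G$-normal series from $V\cap A$ to $V$ of the same length whose sections are abelian, since each embeds into a section of the series for $G/A$. The point to verify is that the property ``generated by $w$-values whose powers are $w$-values'' survives the pull-back. Given a generator of a section for $G/A$, which is the image of a $w$-value $\bar x$, one lifts $\bar x$ to a genuine $w$-value $y=w(a_1,\dots,a_k)$ of $G$; since every power $\bar x^{\,m}$ is again a $w$-value of $G/A$, say the image of a $w$-value $z_m$ of $G$, the element $z_m y^{-m}$ lies in $V\cap A$ and hence in every lower term of the pulled-back series. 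Thus $y^{\,m}$ and $z_m$ have the same image in each section, so the lifted $w$-values both generate the sections and remain good. This disposes of the top part.

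The heart of the matter is the bottom part: constructing a $G$-normal series of length bounded in terms of $w$ and $d$ from $1$ to $V\cap A$, inside the abelian normal subgroup $A$, whose sections are generated by $w$-values all of whose powers are $w$-values. Here I would regard $A$ as a module over the integral group ring $\mathbf{Z}[G/A]$ and exploit that $w$ is multilinear: for a $w$-value $x=w(a_1,\dots,a_k)$ lying in $A$, the discrepancy between $w(a_1,\dots,a_k)^{m}$ and the $w$-value $w(a_1^{m},a_2,\dots,a_k)$ is a product of commutators $[x,{}_i\,a_1]$, all of which lie in $[A,G]$. The task is then to organize $V\cap A$ into boundedly many $G$-invariant module layers on which this discrepancy is absorbed into the next term, so that on each layer the $m$-th power of a generating $w$-value is again the image of a $w$-value. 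Crucially, the bound on the number of layers has to come from the weight $k$ of $w$ and the multilinear expansion, not from any $G$-central series of $A$.

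I expect this last step to be the main obstacle. Making a power of a $w$-value into a $w$-value is automatic when the section is centralized by $G$, but for soluble $G$ the lower central series of $A$ under the $G$-action need not be of bounded length, so one cannot simply refine to central sections. The genuine content is a module-theoretic analysis showing that the multilinearity of $w$ already forces power-closure of the generating $w$-values on a bounded number of layers of $V\cap A$; it is precisely here that the hypothesis that $w$ is a \emph{multilinear} commutator word, rather than an arbitrary group-word, is indispensable.
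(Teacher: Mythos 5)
You should first note a mismatch of expectations: the paper does not prove this statement at all. Theorem \ref{FAM} is imported verbatim from Fern\'andez-Alcober and Morigi \cite{FAM} (Theorem B there), so there is no in-paper proof to compare against; your attempt has to be judged against that source. The top half of your plan is sound: taking $A=G^{(d-1)}$, pulling back the inductively obtained series for $w(G/A)$ and intersecting with $V=w(G)$ does give a $G$-normal series from $V\cap A$ to $V$ of controlled length, and your lifting argument works --- since $A$ lies below every pulled-back term, the discrepancies $z_m y^{-m}\in V\cap A$ vanish in each section, so the lifted genuine $w$-values both generate the sections and are power-closed there. This part is a routine reduction, and it is correct.

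The genuine gap is exactly where you say you expect it: the segment from $1$ to $V\cap A$, which is where the entire content of the theorem lives, is left unproven, and the heuristic you sketch for it cannot be completed as stated. Your own computation shows why: for a $w$-value $x=w(a_1,\dots,a_k)$ in $A$, the difference between $x^m$ and $w(a_1^m,a_2,\dots,a_k)$ is a product of commutators of $x$ with elements of $G$, i.e.\ it lies in $[A,G]$; absorbing such errors into lower layers forces you down the chain $[A,G]\geq[A,G,G]\geq\cdots$, whose length is not bounded in terms of $w$ and $d$ --- the obstruction is the $G$-action on the module $A$, and no ``multilinear expansion'' of the fixed word $w$ can produce boundedly many $G$-invariant layers on its own. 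The missing idea, which is how \cite{FAM} actually proceeds, is a simultaneous induction on the \emph{structure of the word}: one writes $w=[u,v]$ with $u,v$ multilinear commutator words in disjoint variables and builds the series out of subgroups defined via $u$-values and $v$-values, arranged so that on each abelian section one of the two entries of the commutator may be altered modulo the previous term; power-closure then comes from identities of the type $[a,b]^m\equiv[a^m,b]$ valid on such sections, and the length bound arises from a recursion in the height of $w$ together with the derived length $d$. Your plan fixes $w$ and inducts only on $d$, so it is structurally unable to close; as it stands the proposal is a correct reduction plus an accurate description of the difficulty, not a proof.
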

\begin{corollary}\label{cccc}
Assume the hypotheses of Theorem \ref{main} and suppose additionally that $G$ is soluble with derived length $k$. Then each element of $w(G)$ can be written as a product of $\{k,m\}$-boundedly many elements from the subgroups $G_1,\dots,G_m$.
\end{corollary}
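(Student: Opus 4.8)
The plan is to build the required factorisation directly from the normal series furnished by Theorem \ref{FAM}, exploiting that each covering subgroup $G_j$ is closed under multiplication. Notably, the Engel hypothesis inherited from Theorem \ref{main} plays no role here: the statement is purely about how the covering of $w$-values interacts with the soluble structure. First I would apply Theorem \ref{FAM} to the soluble group $G$ of derived length $k$, obtaining a series
\[1=N_0\leq N_1\leq\cdots\leq N_r=w(G)\]
of subgroups normal in $G$, whose length $r$ is bounded in terms of $w$ and $k$, and in which every section $N_i/N_{i-1}$ is abelian and is generated by images of $w$-values $u$ all of whose powers are again $w$-values.

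The heart of the argument is the analysis of one section. Fix $i$, set $A=N_i/N_{i-1}$, and write $\bar x$ for the image of $x\in N_i$ in $A$. By Theorem \ref{FAM} the abelian group $A$ is generated by elements $\bar u$, with $u$ ranging over $w$-values whose powers are all $w$-values, so a generic element $a\in A$ has the form $a=\bar u_1^{\,t_1}\cdots \bar u_s^{\,t_s}$ for suitable such $w$-values $u_1,\dots,u_s$ and integers $t_1,\dots,t_s$ (a finite product, since $G$ is finite). Each $u_\ell^{\,t_\ell}$ is again a $w$-value, hence lies in some $G_{j(\ell)}$ with $j(\ell)\in\{1,\dots,m\}$. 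Since $A$ is abelian I may reorder the product and collect the factors according to the value of $j(\ell)$: for each $j$ set $v_j=\prod_{\ell:\,j(\ell)=j}u_\ell^{\,t_\ell}$, which lies in $G_j$ because $G_j$ is a subgroup (take $v_j=1$ if no index contributes). Then $a=\bar v_1\cdots\bar v_m$, so $a$ is the image of $v_1\cdots v_m\in G_1G_2\cdots G_m$. As $a$ was arbitrary, this yields the inclusion $N_i\subseteq G_1G_2\cdots G_m\,N_{i-1}$.

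Finally I would feed this into a downward induction along the series: starting from $N_r=w(G)$ and iterating the inclusion $N_i\subseteq G_1\cdots G_m\,N_{i-1}$ gives
\[w(G)=N_r\subseteq (G_1G_2\cdots G_m)^r,\]
so every element of $w(G)$ is a product of at most $rm$ elements taken from $G_1,\dots,G_m$. Since $r$ depends only on $w$ and $k$ by Theorem \ref{FAM}, the number $rm$ is $\{k,m\}$-bounded, as required. The one delicate point is the collection step inside each section, and its value lies in the precise form of Theorem \ref{FAM}: it supplies not merely abelian sections generated by $w$-values, but sections generated by $w$-values whose powers are again $w$-values. This is exactly what forces each collected factor $v_j$ to remain inside the covering subgroup $G_j$ and caps the number of factors contributed by a section at $m$, independently of how many generators the section may actually require.
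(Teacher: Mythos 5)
Your proof is correct and follows essentially the same route as the paper: both apply Theorem \ref{FAM} and handle each abelian section by collecting the power-closed $w$-value generators into the covering subgroups, then proceed inductively along the series. The only cosmetic difference is that you iterate the inclusion $N_i\subseteq G_1\cdots G_m\,N_{i-1}$ directly inside $G$, whereas the paper phrases this as an induction on the length of the series reducing everything to the first term $A_1$; your explicit collection step is precisely what underlies the paper's one-line assertion that $A_1$ is the product of the subgroups $A_1\cap G_i$.
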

\begin{proof} Let $1=A_0\leq A_1\leq\dots\leq A_u=w(G)$ be a series as in Theorem \ref{FAM}. Arguing by induction on $u$ it is sufficient to show that each element of $A_1$ can be written as a product of $\{k,m\}$-boundedly many elements from the subgroups $G_1,\dots,G_m$. Since $A_1$ is abelian and generated by $w$-values each of which lies in some $G_i$, we deduce that $A_1$ is the product of subgroups of the form $A_1\cap G_i$. The result follows. 
\end{proof}
 Now we are ready to prove Theorem \ref{main}.
\begin{proof}[{\bf Proof of Theorem \ref{main}}.] Recall that $w$ is a multilinear commutator word. Since each $w$-value in $G$ is $n$-Engel, Baer's theorem implies that the verbal subgroup $w(G)$ is nilpotent. Let $d$ be the weight of the word $w$. Combining Lemmas \ref{delta-outer} and \ref{delta} we deduce that $G^{(d)}$ is $u$-Engel for some $\{m,n,d\}$-bounded number $u$. Theorem \ref{BM} shows that there exists an $\{m,n,d\}$-bounded number $c$ such that $\gamma_c(G^{(d)})$ has $\{m,n,d\}$-bounded exponent. It follows that there is an $\{m,n,d\}$-bounded number $k$ such that $M=G^{(k)}$ has $\{m,n,d\}$-bounded exponent. 

Choose arbitrary elements $a,b\in w(G)$. We will show that the subgroup $\langle a,b\rangle$ is nilpotent of $\{m,n,w\}$-bounded class. Corollary \ref{cccc} shows that any element in $w(G)/M$ can be written as a product of  $\{m,n,w\}$-boundedly many, say $r$, $w$-values. Thus, we can write $a=a_1\cdots a_rm_1$ and $b=b_1\cdots b_rm_2$, where  $a_i,b_i$  are $w$-values for $i=1,\ldots,r$ and $m_1,m_2$ belong to $M$. Let $H$ be the subgroup generated by all these elements, that is, $$H=\langle a_1,\ldots,a_r,b_1,\ldots,b_r,m_1,m_2\rangle.$$ Note that the subgroup $\langle a,b\rangle$ is contained in $H$, and therefore it is sufficient to show that $H$ is nilpotent of $\{m,n,w\}$-bounded class.

Set $N=M\cap H$ and let $\bar{H}$ be the quotient group $H/\Phi(N)$. Note that the image of $N$ in $\bar{H}$ is an abelian group, and so the images of $m_1,m_2$ in $\bar{H}$ are 2-Engel. Note also that the derived length of $\bar{H}$ is at most $k+1$. Lemma \ref{gru} yields that the nilpotency class of $\bar{H}$ is $\{m,n,w\}$-bounded. Thus, we get that the image of $N$ in $\bar{H}$ has $\{m,n,w\}$-boundedly many generators. Of course, this is true also for $N$. Recall that the exponent of $N$ is $\{m,n,w\}$-bounded, and so we obtain from the positive solution of the restricted Burnside problem \cite{Z1,Z0} that the order of $N$ is $\{m,n,w\}$-bounded. Since $H$ is nilpotent, there is an $\{m,n,w\}$-bounded number $t$ such that $N$ is contained in the $t$th term $Z_t(H)$ of the upper central series of $H$.
Consequently $H$ is nilpotent of $\{m,n,w\}$-bounded class, as required. The proof is now complete.
\end{proof}


\begin{thebibliography}{99}

\bibitem{as} C. Acciarri, P. Shumyatsky, \   On profinite groups in with commutators are covered by finitely many subgroups, {\it Math. Z. } {\bf 274} (2013) 239--248.

\vspace{0.1cm}

\bibitem{surveyrendiconti} C. Acciarri and P. Shumyatsky, \  Coverings of commutators in profinite groups, Rend. Semin. Mat. Univ. Padova {\bf 137} (2017) 237--257.

\vspace{0.1cm}

\bibitem{BM} R.\,G.\, Burns, Y. Medvedev, \  A note on Engel groups and local nilpotence, {\it J. Austral. Math. Soc. Ser. A}, {\bf 64} (1998) 92--100.

\vspace{0.1cm}

\bibitem{DMS1} E.\ Detomi, M.\ Morigi and  P.\ Shumyatsky, \  On countable coverings of word values in profinite groups, 
J. Pure Appl. Algebra {\bf 219} (2015) 1020--1030.
 
 \vspace{0.1cm}
 
\bibitem{DMS-revised} E.\ Detomi, M.\ Morigi and  P.\ Shumyatsky, \   On groups covered by locally nilpotent subgroups, Ann. Mat. Pura Appl. (4) {\bf 196} (2017) 1525--1535.

\vspace{0.1cm}

\bibitem{PDM} E. Detomi, M. Morigi, P. Shumyatsky, \  On groups covered by locally nilpotent subgroups, {\it Israel J. Math.}  {\bf 226} (2018) 993--1008.

\vspace{0.1cm}

\bibitem{FAM} G.\, A.\, Fern\'andez-Alcober,  M. Morigi, \ Outer commutator words are uniformly concise, {\it J. London Math. Soc.} {\bf 82} (2010) 581--595.

\vspace{0.1cm}

\bibitem{Huppert2} B. Huppert, N. Blackburn, \ {\it Finite Groups II}, Springer-Verlag, Berlin, (1982).

\vspace{0.1cm}
 
\bibitem{Kh} E.I. Khukhro, \ {\it $p$-Automorphisms of Finite $p$-Groups}, London Math. Soc. Lecture Note Series \textbf{246}, Cambridge University Press, Cambridge, 1998.

\vspace{0.1cm}

\bibitem{KS} E.\,I. Khukhro, P.  Shumyatsky, \  Bounding the exponent of a finite group with automorphisms,  {\it J. Algebra} \textbf {212}  (1999)  363--374. 

\vspace{0.1cm}

\bibitem{luma} A. Lubotzky, A. Mann, \  Powerful $p$-groups. I, Finite Groups,  {\it J. Algebra} \textbf {105}  (1987)  484--505. 

\vspace{0.1cm}

\bibitem{Rob} D.\,J.\,S. Robinson, \  {\it A Course in the Theory of Groups,} Springer-Verlag, New York, 1996.

\vspace{0.1cm}

\bibitem{S-outer} P. Shumyatsky, \  Verbal subgroups in residually finite groups,  {\it  Q. J. Math.} {\bf 51} (2000) 523--528.

\vspace{0.1cm}

\bibitem{S2000} P. Shumyatsky, \   Applications of Lie ring methods to group theory, in {\it Nonassociative Algebra and Its Applications}, (Eds R. Costa et al.), Marcel Dekker, New York, (2000)  373--395. 

\vspace{0.1cm}

 \bibitem{Snilp} P. Shumyatsky, \ On profinite groups with commutators covered by nilpotent subgroups, Rev. Mat. Iberoam. 
 {\bf 32} (2016) 1331--1339.

\vspace{0.1cm}

\bibitem{shusa} P. Shumyatsky, D. S. Silveira, \  On finite groups with automorphisms whose fixed points are Engel, {\it Arch.  Math.} {\bf 106} (2016) 209--218.

\vspace{0.1cm}

\bibitem{trau} G. Traustason, \ {\it Engel groups}. Groups St Andrews 2009 in Bath. Volume 2, 520--550, London Math. Soc. Lecture Note Ser., 388, Cambridge Univ. Press, Cambridge, 2011.

\vspace{0.1cm}

\bibitem{WZ} J.\,S. Wilson, E.\,I. Zelmanov, \ Identities for lie algebras of pro-$p$ groups, {\it J. Pure and Appl. Algebra} \textbf{81} (1992) 103--109.

\vspace{0.1cm}

\bibitem{Z1} E.\,I. Zelmanov, \  {\it Nil Rings and Periodic Groups},  The Korean Math. Soc. Lecture Notes in Math., Seoul, (1992).

\vspace{0.1cm}

\bibitem{Z0} E.\,I. Zelmanov, \ Lie methods in the theory of nilpotent groups,  in {\it Groups '93 Galaway/ St Andrews}, Cambridge University Press, Cambridge,  (1995) 567--585.

\vspace{0.1cm}

\bibitem{zenew} E.\,I. Zelmanov, \ Lie algebras and torsion groups with identity, {\it J. Comb. Algebra}, {\bf 1} (2017) 289--340.


\end{thebibliography}
\end{document}